\newtheorem{theorem}{Theorem}[section]
\newtheorem{prop}{Proposition}[section]
\theoremstyle{remark}
\numberwithin{equation}{section}
 \DeclareMathOperator\hdim{\dim_H}
\DeclareMathOperator\adim{\dim_A}
\DeclareMathOperator\fdim{\dim_F}
\DeclareMathOperator\spt {spt}
\def\N{\mathbb{N}}
\begin{document}

\title[On Assouad dimension and arithmetic progression]{On Assouad dimension and arithmetic progressions in sets defined by digit restrictions}

\author[Jinjun Li]{Jinjun Li}

\address[Jinjun Li]{School of Mathematics and Information Science,
 Guangzhou University, Guang\-zhou, 510006, P.~R.~China}
\email{li-jinjun@163.com}


\author{Min Wu*}
\address[Min Wu]{Department of Mathematics, South China
 University of Technology, Guang\-zhou, 510641, P.~R.~China}
\email{wumin@scut.edu.cn}

\author{Ying Xiong}
\address[Ying Xiong]{Department of Mathematics, South China
 University of Technology, Guang\-zhou, 510641, P.~R.~China}
\email{xiongyng@gmail.com}

\thanks{*Corresponding author. Email: wumin@scut.edu.cn}

\subjclass{Primary 28A80; 42A38}

\keywords{set defined by digit restrictions, Assouad dimension, arithmetic progressions, Fourier dimension.}


\begin{abstract}
We show that the set defined by digit restrictions contains arbitrarily long arithmetic progressions if and only if its Assouad dimension is one. Moreover, we show that for any $0\le s\le 1$, there exists some set on $\mathbb{R}$ with Hausdorff dimension $s$ whose Fourier dimension is zero and it contains arbitrarily long arithmetic progressions.
\end{abstract}

\maketitle
\section{Introduction}
For $k\ge 3$, we call $A\subset \mathbb{R}$ a $k$-term arithmetic progression with gap $\delta$ if there exist some $t>0$ such that
\[
A=\{t+\delta x: x=0,\cdots, k-1\}.
\]
We call a set contains arbitrarily long arithmetic progressions if it contains a $k$-term arithmetic progression for any $k\ge 3.$
In the fields of combinatorial theory and number theory, we usually want to know whether a set can contain arithmetic progressions and therefore it is important to find conditions guaranteeing the existence of arithmetic progressions. In the discrete case, a theorem of Roth \cite{Ro} states that if a subset $A\subset \N$ has positive upper density, i.e.,
\[
\overline{d}(A)=\limsup_{n\to \infty}\frac{\sharp(A\cap\{1,2\cdots, n\})}{n}>0,
\]
then $A$ must contain a non-trivial $3$-term arithmetic progression. Here and in the sequel, $\sharp(E)$ denotes the number of elements in $E.$  Roth's result partially solved a conjecture related to sets containing arithmetic progressions posed by Erd\H{o}s and Tur\'{a}n \cite{ET}.  Later, Szemer\'{e}di \cite{Sz1,Sz2} extended this to arbitrarily long arithmetic progressions. It is worth to mention that Furstenberg \cite{Fur} gave a second proof of Szemer\'{e}di's theorem by employing ergodic theory. However, Szemer\'{e}di's theorem can also holds for sets with zero density, for example, Green and Tao \cite{GT} showed that the primes contain arbitrarily long arithmetic progressions. In the continuous case,  {\L}aba and Pramanik \cite{LP} recently initiated the study on the relationship between Fourier decay and the existence of arithmetic progressions. They proved that, if $E\subset \mathbb{R}$ is a closed set with Hausdorff dimension close to one and $E$ supports a probability measure obeying appropriate Fourier decay and mass decay, then $E$ contains non-trivial 3-term arithmetic progressions. Their results attract many scholars to study the connection between Fourier decay and the existence of arithmetic progressions, see \cite{Car,CLP, FSY, CKL,Po,Po2, Shm} and the references therein. In particular, Shmerkin \cite{Shm} showed that there exist Salem sets without any $3$-term arithmetic progressions, and Lai \cite{CKL} showed that there exist a prefect set with zero Fourier dimension but containing arbitrarily long arithmetic progressions.

 There is a natural question: under what conditions can a set in $\mathbb{R}$ contain arbitrarily long arithmetic progressions? Fraser and Yu \cite{FY} showed that a bounded set cannot contain arbitrarily long arithmetic progressions if it has Assouad dimension strictly smaller than one. Fraser and Yu's result extended a result of Dyatlov and Zahl \cite{DZ}, which states that any Ahlfors-David regular set of dimension less than one cannot contain arbitrarily long arithmetic progressions. Recall that we call a set $E$ an Ahlfors-David regular set of dimension $\alpha$ if there exists a constant $C\ge 1$ and a Borel probability measure $\mu$ on $E$ such that
 \[
 C^{-1} r^\alpha\le \mu(B(x,r)\cap E)\le Cr^\alpha
 \]
for any $x\in E$ and $r>0.$ It is well known that Ahlfors-David regular set is a kind of nice set in the sense that its Hausdorff and Assouad dimensions are equal. However, the Fourier dimension may be different from the Hausdorff dimension for an Ahlfors regular set. For example, the classical Cantor ternary set has Fourier dimension $0$ and Hausdorff dimension $\log 2/\log 3$, respectively.

 Fraser and Yu \cite{FY} also showed that the converse of their result is not true by constructing examples. For example, let $E=\{1, 1/2^3, \cdots, 1/n^3,\cdots\}\subset \mathbb{R}.$ Then we can check that  $\adim E=1$, but it contains no arithmetic progressions of length $3.$ However, they proved that a set in $\mathbb{R}$ asymptotically contains arbitrarily long arithmetic progressions (in the sense that it does not give strict containment of arithmetic
progressions) if and only if it has full Assouad dimension.

  In this paper, we will consider the set defined by digit restrictions and show that it contains arbitrarily long arithmetic progressions if and only if its Assouad dimension is one. Moreover, we show that for any $0\le s\le 1$, there exists some set in $\mathbb{R}$ with Hausdorff dimension $s$ whose Fourier dimension is zero and it contains arbitrarily long arithmetic progressions.

 \section{Main results}
Let us first recall the definitions of set defined by digit restrictions and Assouad dimension.  Let $b\ge 2$ be an integer and $D\subset \{0, 1,\cdots b-1\}$ a nonempty proper subset. Let $S\subset \N$ be an infinite set, and let $E_{S,D}\subset [0,1]$ be the compact set
\[
E_{S,D}=\left\{\sum_{n=1}^\infty x_nb^{-n}: \text{$x_n\in D$ if $n\notin S$ and  $x_n\in\{0,1,\cdots,b-1\}$ otherwise}
\right\}.
\]
The set $E_{S,D}$ is called a set defined by digit restrictions. It is well known that the Hausdorff dimension of $E_{S,D}$ is closed related to
 the lower density of $S$. More precisely,
\begin{equation}\label{hd}
\hdim E_{S,D}=\underline{d}(S,D),
\end{equation}
where
\[
\underline{d}(S,D)=\frac{\log (\sharp D)}{\log b}+\left(1-\frac{\log (\sharp D)}{\log b}\right)
\liminf_{N\to \infty}\frac{\sharp(S\cap\{1,\cdots, N\})}{N}.
\]
Here and in the sequel, $\hdim E$ denotes the Hausdorff dimension of the set $E$.  For more details about Hausdorff dimension and the theory of fractal dimensions, we refer the reader to the famous book \cite{Fal}. And for the proof of $\eqref{hd}$ and other dimensional properties and applications of $E_{S,D}$ in the case $b=2$ and $D=\{0\}$, see \cite{BPe,BP,LW2}.

Assouad dimension, introduced by Assouad \cite{Ass}, provides quantitative information on the local behaviour of the geometry of the underlying set.  More precisely, Let $X$ be a metric space and $E\subset X$. For $r, R>0$, let $N_r(E)$ denote the least number of balls with radii equal to $r$ needed to cover the set $E$ and let
\[
N_{r, R}(E)=\sup_{x\in E}N_r(B(x,R)\cap E).
\]
Then the Assouad dimension of $E$ is defined as
\[
\begin{split}
\adim E=\inf\bigg\{\alpha\geq 0 : \quad &\text{there are constants $b,C>0$ such that for any} \\
&\text{ $0<r<R<b$, $N_{r,R}(E)\leq C\left(\frac{R}{r}\right)^{\alpha}$ holds}\bigg\}.
\end{split}
\]

It is well known that $\hdim E\le \adim E$ for any bounded set $E\subset \mathbb{R}$. For other basic properties of Assouad dimension, see \cite{Luu, Rob}. In particular, The Assouad dimension of $E_{S,D}$ is
\[
\begin{split}
\adim E_{S,D}=&\frac{\log (\sharp D)}{\log b}\\
              &+\left(1-\frac{\log (\sharp D)}{\log b}\right)\limsup\limits_{m\to \infty}\sup_{k\ge 1}\frac{\sharp(S\cap\{k+1,\cdots, k+m\})}{m},
\end{split}
\]
see \cite{DWW,LLMX}.

 It is worth to point out that the Assouad dimension plays an important role in the theory of embeddings of metric spaces in Euclidean spaces and in the study of quasimmetric mappings, see \cite{Hei,Luu,Rob}.

The following result establishes the relationship between the Assouad dimension and the existence of arithmetic progressions for the set defined by digit restrictions.
\begin{theorem}\label{MRR1}
Let $E_{S,D}$ be the set defined by digit restrictions. Then, $E_{S,D}$ contains arbitrarily long arithmetic progressions if and only if $\adim E_{S,D}=1$.
\end{theorem}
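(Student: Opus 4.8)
The plan is to translate the Assouad-dimension formula for $E_{S,D}$ into a purely combinatorial condition on $S$, and then build the progressions by hand. Write $\beta=\log(\sharp D)/\log b$ and
\[
\gamma=\limsup_{m\to\infty}\ \sup_{k\ge 1}\frac{\sharp(S\cap\{k+1,\dots,k+m\})}{m}.
\]
Since $D$ is a proper subset of $\{0,1,\dots,b-1\}$ we have $\beta<1$, and the quoted formula reads $\adim E_{S,D}=\beta+(1-\beta)\gamma$, so $\adim E_{S,D}=1$ if and only if $\gamma=1$. Thus the whole theorem reduces to proving that $E_{S,D}$ contains arbitrarily long arithmetic progressions if and only if $\gamma=1$.

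For the ``if'' direction I would first establish the combinatorial claim that $\gamma=1$ holds if and only if $S$ contains arbitrarily long runs of consecutive integers, i.e.\ for every $\ell$ there is an $a$ with $\{a+1,\dots,a+\ell\}\subset S$. One implication is immediate: a run of length $\ell$ makes the window consisting of that run have full density, so $\gamma=1$. For the converse I would argue by contraposition: if every run of $S$ has length at most $L$, then inside any window of $m$ integers the elements of $S$ form runs of length $\le L$ separated by gaps of length $\ge 1$, and a short counting argument bounds $\sharp(S\cap\text{window})$ by $(m+1)L/(L+1)$, whence $\gamma\le L/(L+1)<1$.

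Given a run $\{j+1,\dots,j+\ell\}\subset S$, the progression is produced directly: fix every digit outside this run to an admissible value (any element of $D$ at positions not in $S$, anything at positions in $S$) and let the $\ell$ digits $x_{j+1},\dots,x_{j+\ell}$ range freely over $\{0,\dots,b-1\}$. Since $\sum_{n=j+1}^{j+\ell}x_n b^{-n}=i\,b^{-(j+\ell)}$ runs through $i=0,1,\dots,b^{\ell}-1$, the resulting points are exactly $C+i\,b^{-(j+\ell)}$, where $C$ is the fixed contribution, and all of them lie in $E_{S,D}$. This is a $b^{\ell}$-term arithmetic progression with gap $b^{-(j+\ell)}$, so letting $\ell\to\infty$ yields arbitrarily long progressions. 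The ``only if'' direction is then immediate from the theorem of Fraser and Yu quoted in the introduction: as $E_{S,D}\subset[0,1]$ is bounded and contains arbitrarily long arithmetic progressions, it cannot have Assouad dimension strictly less than one, so $\adim E_{S,D}=1$.

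The step I expect to be the real obstacle is the converse half of the combinatorial claim, namely upgrading the density statement $\gamma=1$ — which only asserts that $S$ fills certain windows up to a proportion arbitrarily close to $1$ — to the existence of genuinely long blocks of consecutive integers in $S$. The run-length counting bound $\sharp(S\cap\text{window})\le (m+1)L/(L+1)$ is what makes this precise, and arranging the quantifiers in the right order (choose the cap $L$ from the target progression length, then extract from $\gamma=1$ a window of large $m$ whose density forces a long consecutive run) is the delicate point; the rest of the argument is routine.
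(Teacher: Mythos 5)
Your proposal is correct and follows essentially the same route as the paper: reduce $\adim E_{S,D}=1$ to the window-density condition $\gamma=1$, show by the same contrapositive run-length counting that this forces arbitrarily long blocks of consecutive integers in $S$, and build the progression by letting the digits over such a block range freely (the paper likewise invokes Fraser--Yu for the converse direction, alongside a short direct covering count). The only differences are cosmetic: the paper's progression consists of the $(\sharp D)b^{N_j}$ left endpoints of consecutive basic intervals inside a fixed $b$-adic interval rather than your $b^{\ell}$ points $C+i\,b^{-(j+\ell)}$, which is the same construction in different clothing.
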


 The Fourier dimension of a set is  a measure of exactly how rapid the decay of the Fourier transformation of the measures supported on it. More precisely,
the Fourier dimension of $A\subset \mathbb{R}$ is defined as
\[
\begin{split}
\fdim A=\sup\Big\{&\beta\in[0,1]:  \quad \text{there exist a probability measure $\mu$ on $E$ and }  \\
&\text{ constant $C>0$, such that $|\widehat{\mu}(\xi)|\le C|\xi|^{-\beta/2}$ for all $\xi \in \mathbb{R}$}\Big\},
\end{split}
\]
where $\widehat{\mu}(\xi)=\int e^{-2\pi i\xi x}d\mu(x)$ is the Fourier transformation of $\mu$. It is well known that $\fdim A\le \hdim A$ for any Borel set $A\subset \mathbb{R}$ and $A$ is called a Salem set if $\fdim A= \hdim A,$ see \cite{Ma}.

Roughly speaking, {\L}aba and Pramanik's  result can be interpreted as saying that large Fourier
decay together with large power mass decay imply the existence of progressions. However, we will show that, for nontrivial set defined by digit restrictions, its Fourier dimension is zero. To avoid the trivial case, we require that $\N\setminus S$ is not finite. In fact, if $\N\setminus S$ is a finite set, by the construction of $E_{S,D}$, it will be trivial in the sense that it contains a nontrivial interval and therefore its Fourier dimension is one. However, the Fourier dimensions of sets defined by digit restriction are all zero except the trivial cases. This is the content of the following result.
\begin{theorem}\label{MRR2}
Suppose that $\N\setminus S$ is not finite. Then, $\fdim E_{S,D}=0$.
\end{theorem}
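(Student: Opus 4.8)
The plan is to prove the stronger statement that $E_{S,D}$ supports no Rajchman measure, i.e.\ that no probability measure $\mu$ on $E_{S,D}$ satisfies $\widehat{\mu}(\xi)\to 0$ as $|\xi|\to\infty$. This suffices: any measure witnessing $\fdim E_{S,D}>0$ would obey a power bound $|\widehat{\mu}(\xi)|\le C|\xi|^{-\beta/2}$ for some $\beta>0$ and hence be Rajchman, so ruling these out forces $\fdim E_{S,D}=0$. In the language of classical harmonic analysis this amounts to checking that $E_{S,D}$ is a set of uniqueness, which I will do by exhibiting it concretely as an $H$-set.

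First I would fix a forbidden digit $c\in\{0,\dots,b-1\}\setminus D$ (which exists since $D$ is a proper subset) together with the open arc $I=(c/b,(c+1)/b)\subset\mathbb{R}/\mathbb{Z}$. Writing $\mathbb{N}\setminus S=\{t_1<t_2<\cdots\}$, which is infinite by hypothesis, I set $n_k=b^{\,t_k-1}$. The key elementary computation is that for every $x=\sum_n x_n b^{-n}\in E_{S,D}$ one has $n_k x\equiv x_{t_k}/b+\theta \pmod 1$ with a tail term $\theta\in[0,1/b]$, so that $n_k x\bmod 1$ lies in $[x_{t_k}/b,(x_{t_k}+1)/b]$. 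Since $x_{t_k}\in D$ while $c\notin D$, a short case check ($x_{t_k}<c$ or $x_{t_k}>c$) shows this interval is disjoint from the open arc $I$. Hence $n_k x\bmod 1\notin I$ for every $x\in E_{S,D}$ and every $k$, and $n_k\to\infty$.

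To conclude, suppose toward a contradiction that $\mu$ is a Rajchman probability measure on $E_{S,D}$, and let $\nu_k$ be the push-forward of $\mu$ under $x\mapsto n_k x\bmod 1$, a probability measure on the circle. By the previous paragraph $\nu_k(I)=0$ for every $k$. On the other hand its Fourier coefficients are $\widehat{\nu_k}(m)=\widehat{\mu}(m n_k)$, and for each fixed $m\neq 0$ we have $|m n_k|\to\infty$, so $\widehat{\nu_k}(m)\to 0=\widehat{\lambda}(m)$, while $\widehat{\nu_k}(0)=1$; since Fourier coefficients determine measures, $\nu_k$ converges weak-$\ast$ to normalized Lebesgue measure $\lambda$ on the circle. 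Applying the portmanteau theorem to the open set $I$ gives $\liminf_k\nu_k(I)\ge\lambda(I)=1/b>0$, contradicting $\nu_k(I)=0$.

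The step I expect to be the real obstacle is conceptual rather than computational: the naive attempt to bound $|\widehat{\mu}(b^{\,t_k-1})|$ from below at a single frequency fails, because the free digits at positions beyond $t_k$ can spread the phases $e^{-2\pi i n_k x}$ across an arc of length comparable to $1$ and permit complete cancellation for a cleverly chosen $\mu$. The resolution above sidesteps this by never seeking a pointwise lower bound at one frequency; instead it exploits the \emph{full} decay hypothesis along the entire sequence $(m n_k)_{m}$ to force equidistribution of the renormalized measures $\nu_k$, which is incompatible with the permanent spectral gap $I$ produced by the forbidden digit. The only facts used beyond the digit computation are the Fourier characterization of weak-$\ast$ convergence on the circle and the portmanteau inequality for open sets, both standard.
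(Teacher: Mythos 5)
Your proof is correct, and at its core it is the same Salem--Zygmund ``$H$-set'' argument that the paper uses: both proofs exploit the fact that multiplication by $b^{t_k-1}$ for $t_k\in\mathbb{N}\setminus S$ sends $E_{S,D}$ (mod $1$) into the complement of the fixed open arc $I=(c/b,(c+1)/b)$ attached to a forbidden digit, and both then use the full Rajchman hypothesis along the frequencies $m b^{t_k-1}$, $m\in\mathbb{Z}$, rather than a lower bound at a single frequency. Where you genuinely differ is in the finishing step: the paper fixes a Schwartz function $\varphi$ with $\spt\varphi\subset I$ and $\int\varphi=1$, expands $\varphi(\{b^{k_j}x\})$ in a Fourier series, and applies Parseval to force $0=\int\varphi_j\,d\mu\to\overline{\widehat{\varphi}(0)}\,\widehat{\mu}(0)=1$; you instead note that the push-forwards $\nu_k$ of $\mu$ under $x\mapsto b^{t_k-1}x\bmod 1$ converge weak-$\ast$ to Lebesgue measure and invoke the portmanteau inequality for the open arc $I$. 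These are two implementations of the same estimate (the paper's Parseval computation is precisely the statement that $\int\varphi\,d\nu_j\to\int\varphi\,d\lambda$ for that test function), so neither is more general in substance, but your version is cleaner in one respect: it applies directly to an arbitrary probability measure on $E_{S,D}$, whereas the paper first proves the claim for the subclass with $1\notin S$ and then decomposes a general $\nu$ as $\sum_i\nu|_{J_i}$ --- a step that as written needs extra care, since the restrictions are not probability measures and $\limsup_{|n|\to\infty}|\widehat{\nu_i}(n)|>0$ for each $i$ does not by itself yield $\limsup_{|n|\to\infty}|\sum_i\widehat{\nu_i}(n)|>0$. Your argument sidesteps that decomposition entirely, and your reduction of $\fdim E_{S,D}=0$ to the nonexistence of Rajchman measures matches the paper's reduction of Theorem \ref{MRR2} to Proposition \ref{pro}.
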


Combining the above two results and the formula of the Hausdorff dimension of $E_{S,D}$, we can obtain the following result.
\begin{theorem}\label{MRR3}
For any $0\le s\le 1$, there exists a compact set $E\subset \mathbb{R}$ such that $\hdim E=s, \fdim E=0$,  and it contains arbitrarily long arithmetic progressions.
\end{theorem}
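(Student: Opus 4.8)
The plan is to realize the desired set as a set defined by digit restrictions $E_{S,D}$ and to reduce the entire statement to an elementary construction of the index set $S$. First I would take $b=2$ and $D=\{0\}$, so that $\sharp D=1$ and $\frac{\log(\sharp D)}{\log b}=0$. With this choice the Hausdorff dimension formula \eqref{hd} and the Assouad dimension formula for $E_{S,D}$ collapse to
\[
\hdim E_{S,D}=\liminf_{N\to\infty}\frac{\sharp(S\cap\{1,\dots,N\})}{N},
\qquad
\adim E_{S,D}=\limsup_{m\to\infty}\sup_{k\ge1}\frac{\sharp(S\cap\{k+1,\dots,k+m\})}{m}.
\]
By Theorem \ref{MRR1}, $E_{S,D}$ contains arbitrarily long arithmetic progressions precisely when the second quantity equals $1$, and this happens as soon as $S$ contains runs of consecutive integers of every length. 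By Theorem \ref{MRR2}, $\fdim E_{S,D}=0$ whenever $\N\setminus S$ is infinite. Hence it suffices to construct an infinite set $S\subset\N$ with infinite complement whose lower density equals $s$ and which contains arbitrarily long runs of consecutive integers.

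For $s=1$ this is immediate: take $\N\setminus S=\{2^k:k\ge1\}$. This complement is infinite and has density zero, so the lower density of $S$ is $1$; and between consecutive omitted points $S$ contains a run of $2^k-1\to\infty$ consecutive integers, forcing $\adim E_{S,D}=1$. For $0\le s<1$ I would build $S$ in stages, specifying $S\cap\{1,\dots,M_j\}$ at stage $j$. At stage $j+1$ I first append a block of $\ell_{j+1}$ consecutive integers, all placed in $S$, with $\ell_{j+1}\uparrow\infty$, and then append a long \emph{sparse} stretch in which elements are inserted at asymptotic density $s$, choosing $M_{j+1}$ so large that the running density $\frac{\sharp(S\cap\{1,\dots,M_{j+1}\})}{M_{j+1}}$ is driven back to within $1/j$ of $s$. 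A short computation shows that within the sparse stretch the surplus created by the run is drained \emph{monotonically} down toward $s$, so the running density never falls below $s$; thus the lower density is at least $s$, while the near-equality at each $M_j$ pins the $\liminf$ at exactly $s$. The runs of length $\ell_{j+1}$ give $\adim E_{S,D}=1$, and since $s<1$ the sparse stretches have density strictly below $1$, leaving infinitely many integers outside $S$.

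Once such an $S$ is fixed the theorem follows at once: $\hdim E_{S,D}=s$ by the displayed formula, $\fdim E_{S,D}=0$ by Theorem \ref{MRR2}, and $E_{S,D}$ contains arbitrarily long arithmetic progressions by Theorem \ref{MRR1}. The only genuine obstacle is the construction of $S$, where three requirements pull against one another: the arbitrarily long runs force the local density arbitrarily close to $1$, yet the global lower density must equal the prescribed (possibly very small) value $s$, and the complement must remain infinite. The tension is resolved by exploiting that lower density is a $\liminf$: each long run is followed by a sparse stretch long enough to wash out its contribution, so the runs never lift the $\liminf$ above $s$. The one point I expect to require careful verification is exactly this, namely that the intermediate densities in the sparse stretches do not dip below $s$.
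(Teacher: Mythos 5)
Your proposal follows essentially the same route as the paper: reduce to $E_{S,\{0\}}$ with $b=2$, invoke Theorems \ref{MRR1} and \ref{MRR2} together with the dimension formulas, and then construct $S$ with lower density $s$, arbitrarily long runs of consecutive integers, and infinite complement via alternating dense runs and long sparse stretches (the paper does the same block construction with explicit sequences $M_n$, treating $s=0$, $0<s<1$, $s=1$ separately). Your monotonicity check that the running density never dips below $s$ during a sparse stretch is correct, and your $s=1$ choice $\N\setminus S=\{2^k\}$ is in fact simpler than the paper's.
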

It is worth to point out that Lai \cite{CKL} obtained the same result as above Theorem \ref{MRR3} with the help of some special Moran sets.
\section{Proofs }
This section is devoted to the proofs of our results.

\begin{proof}[Proof of Theorem $\ref{MRR1}$.]
Due to the Lebesgue density theorem, any subset on $\mathbb{R}$ with positive Lebesgue measure must contain arbitrary long arithmetic progressions.  On the other hand, it is well known that the Assouad dimension of nontrivial interval is one. Therefore, if $\N\setminus S$ is finite, the claim in Theorem \ref{MRR1} holds since $E_{S,D}$ contains a nontrivial interval.

Next, we assume that $\N\setminus S$ is not finite.

If $E_{S,D}$ contains arbitrarily long arithmetic progressions, then, for any $n\ge 3$ there exist $x_n$ and $t_n>0$ such that $x_n, x_n+t_n, \cdots, x_n+nt_n\in E_{S,D}$. Since $E_{S,D}$ is compact, we have $\sup_n nt_n<\infty$ and therefore $t_n\to 0$ as $n\to \infty.$ Put $R_n=nt_n$ and $r_n=t_n$. It is easy to check that
\[
N_{r_n, R_n}( E_{S,D})\ge n=\left(\frac{R_n}{r_n}\right)^1,
\]
which implies that $\adim E_{S,D}\ge 1$ and therefore $\adim E_{S,D}= 1.$ Let us remark that the claim of this part can be proved by the result of Fraser and Yu \cite{FY} directly.

On the other hand, suppose that $\adim E_{S,D}=1,$ that is,
\[
\frac{\log (\sharp D)}{\log b}+\left(1-\frac{\log (\sharp D)}{\log b}\right)\limsup\limits_{m\to \infty}\sup_{k\ge 1}\frac{\sharp(S\cap\{k+1,\cdots, k+m\})}{m}=1.
\]
It implies that
\begin{equation}\label{ad}
\limsup\limits_{m\to \infty}\sup_{k\ge 1}\frac{\sharp(S\cap\{k+1,\cdots, k+m\})}{m}=1.
\end{equation}

\begin{figure}
\centering
{\includegraphics[width=.8\textwidth]{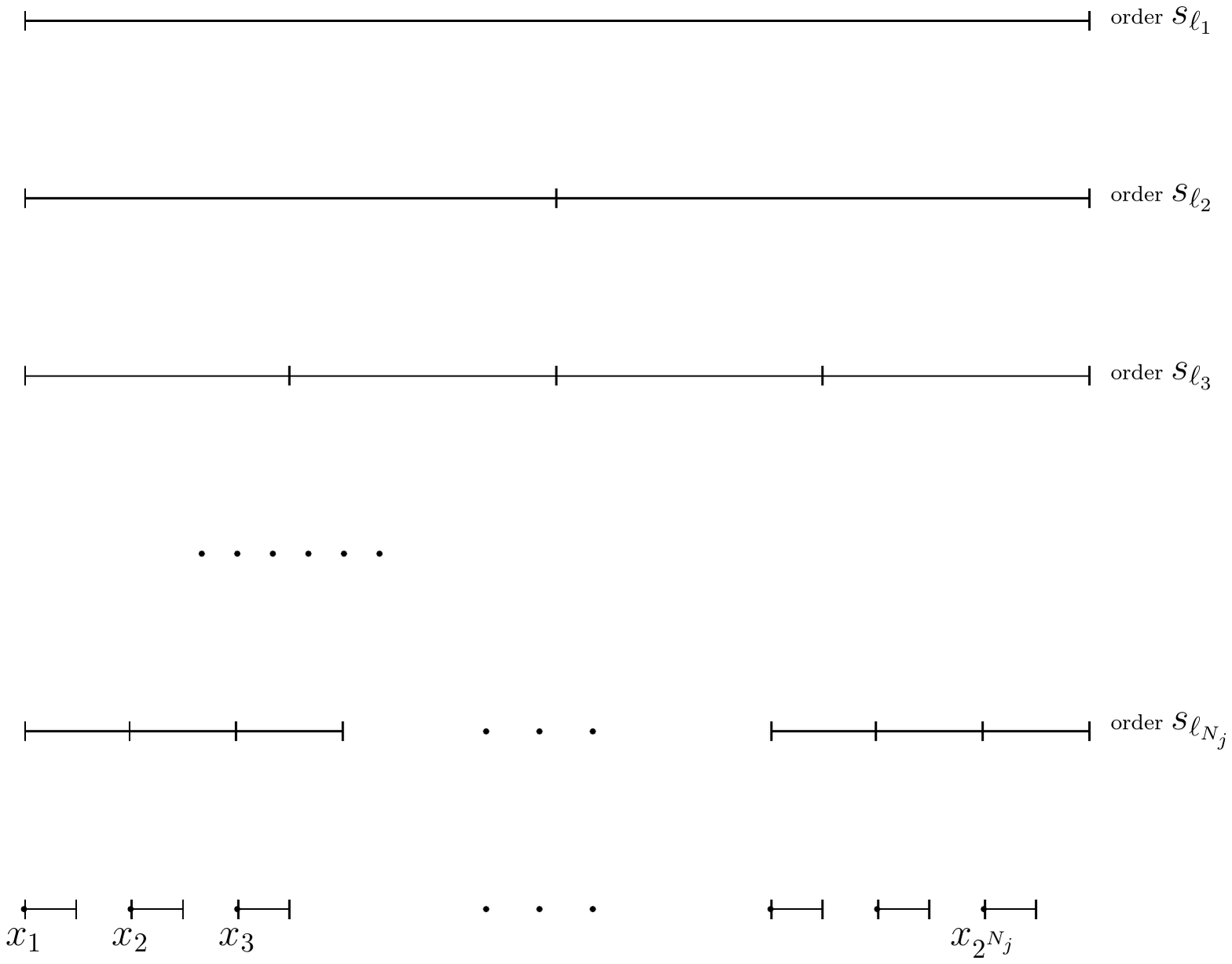}}\\[-28ex]
\parbox{.8\textwidth}{\footnotesize \textbf{Fig.1}
\centering
 The construction of the arithmetic progression.}
\label{fig:1}
\end{figure}
 \bigskip

Recall that $S=\{s_1, s_2,\cdots, s_n,\cdots\}$. For any positive integers $m, n$ with $m<n$, define
\[
\begin{split}
N_m^n(S)=\max\big\{\ell: &\quad \text{there exists $i\ge 1$ such that $s_{i+j}=s_{i+j-1}+1$}\\
&\text{ for any $1\le j\le \ell$ and $s_k\in[m, n]$ for any $i\le k\le i+\ell$}\big\}.
\end{split}
\]
That is to say, $N_m^n(S)$ is the maximum number of successive integers among the elements of $S$ falling in the interval $[m,n]$.

We claim that, by \eqref{ad},  there exist $m_j\nearrow +\infty$ and $\{k_j\}\subset \N$ such that
\begin{equation}\label{cond1}
\text {$N_{k_{j}+1}^{k_j +m_j}(S)\to +\infty$ as $j\to \infty.$}
\end{equation}
In fact, if there exists some constant $M>0$ such that
\[
\text {$N_{k+1}^{k +m}(S)\le M$ for any $m\ge 1$ and $k\ge 1$,}
\]
then, for any $k\ge 1$ and large enough $m$
\[
\frac{\sharp(S\cap\{k+1,\cdots, k+m\})}{m}\le \frac{M\cdot [\frac{m}{M+1}]}{m}\le \frac{M}{M+1}<1,
\]
which is contradictory to the assumption \eqref{ad}.

For any $n\ge 2,$  by $\eqref{cond1}$, there exists some $j=j(n)$ such that
\begin{equation}\label{aa}
N_{k_{j}+1}^{k_j +m_j}(S)\ge \frac{\log n-\log(\sharp D)}{\log b}.
\end{equation}
Write  $N_j=N_{k_{j}+1}^{k_j +m_j}(S).$ By the definition, there exist some $s_{\ell_1}, s_{\ell_2}\cdots, s_{\ell_{N_j}}\in [k_j+1, k_j+m_j]$ such that $s_{\ell_{j+1}}=s_{\ell_j}+1$ for any $1\le j\le N_j-1$ and $s_{\ell_{N_j}}+1\notin S.$

By the definition, the set $E_{S,D}$ hits exactly $M_n$ $b$-adic intervals of generation $n$, where
\[
M_n=b^{\sharp(S\cap\{1,2,\cdots,n\})}( \sharp D)^{\sharp((\N\setminus S)\cap \{1,2,\cdots,n\})}.
\]
 These $b$-adic intervals are called the basic intervals of order $n$. Take any $d\in D$. Let
 \[
 I_{s_{\ell_1}}=\left(\frac{d}{b^{s_{\ell_1}}}, \frac{d+1}{b^{s_{\ell_1}}}\right).
 \]
 By the construction of $E_{S,D}$ and the definition of $N_j$, the interval $I_{s_{\ell_1}}$ contains $(\sharp D)b^{N_j}$ basic intervals of order $s_{\ell_{N_j}+1}$. Let $x_1, x_2, \cdots, x_{(\sharp D)b^{N_j}}$ be the left endpoints of these intervals from left to the right. Fig.1 illustrates the construction of the points $x_1, \cdots, x_{(\sharp D)b^{N_j}}$ in the special case that $b=2$ and $D=\{0\}$. Then, it is easy to check that all these points are in $E_{S,D}$, and for any $1\le i\le (\sharp D)b^{N_j}-1$,
\[
|x_{i+1}-x_i|=b^{-s_{\ell_{N_j}}-1}.
\]
 It follows from $\eqref{aa}$ that $(\sharp D)b^{N_j}\ge n$ and therefore $E_{S,D}$ contains a arithmetic progressions of length $n$. By the arbitrariness of $n$ we claim that $E_{S,D}$ contains arbitrarily long arithmetic progressions.
\end{proof}

Next we shall prove Theorem $\ref{MRR2}$. It follows from the definition of the Fourier dimension and the following proposition immediately.
\begin{prop}\label{pro}
Suppose that $\N\setminus S$ is not finite and  $\nu$ is a probability measure supported on $E_{S,D}.$ Then,
\[
\limsup_{|n|\to \infty}|\widehat{\nu}(n)|>0,
\]
where
\[
\widehat{\nu}(n)=\int e^{-2\pi in t}d\nu(t), n\in \mathbb{Z}.
\]
\end{prop}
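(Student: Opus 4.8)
The plan is to argue by contradiction: I assume $\widehat{\nu}(n)\to 0$ as $|n|\to\infty$ and use that $\N\setminus S$ is infinite to reach a contradiction. The starting point is a structural observation about the digit constraint at a single scale. Fix $m\in\N\setminus S$ and let $\sigma_m$ denote the push-forward of $\nu$ under the map $T_m\colon t\mapsto b^{m-1}t \bmod 1$, viewed as a measure on the circle $\R/\Z$ (here $\R=\mathbb{R}$). Writing $t=\sum_n x_nb^{-n}\in E_{S,D}$, a short computation of the fractional part gives $T_m(t)=(x_m+\{b^mt\})/b$, whence $T_m(t)\in[x_m/b,(x_m+1)/b)$. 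Since $m\notin S$ forces $x_m\in D$, the measure $\sigma_m$ is supported in $\bigcup_{d\in D}[d/b,(d+1)/b]$. As $D$ is a proper subset of $\{0,\dots,b-1\}$, there is some $c\notin D$, so $\sigma_m$ assigns no mass to the open interval $J=(c/b,(c+1)/b)$ of length $1/b$. Two features are crucial: the gap $J$ is the \emph{same} for every $m\in\N\setminus S$, and the Fourier coefficients of $\sigma_m$ form a subsequence of those of $\nu$, namely $\widehat{\sigma_m}(a)=\int e^{-2\pi i a b^{m-1}t}\,d\nu(t)=\widehat{\nu}(ab^{m-1})$ for every $a\in\Z$ (with $\Z=\mathbb{Z}$).

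Next I fix a test function that does not depend on $m$. Choose $\psi\in C^\infty(\R/\Z)$ supported in a compact subinterval of $J$ with $\int\psi\,du=1$; then $\spt\psi$ is disjoint from $\spt\sigma_m\subset\bigcup_{d\in D}[d/b,(d+1)/b]$, so $\int\psi\,d\sigma_m=0$ for every $m\in\N\setminus S$. Expanding $\psi$ in its Fourier series (absolutely convergent since $\psi$ is smooth) and integrating against $\sigma_m$ gives
\[
0=\int \psi\,d\sigma_m=\sum_{a\in\Z}\widehat{\psi}(a)\,\overline{\widehat{\sigma_m}(a)}=\sum_{a\in\Z}\widehat{\psi}(a)\,\overline{\widehat{\nu}(ab^{m-1})}.
\]
Since $\widehat{\psi}(0)=\int\psi\,du=1$ and $\widehat{\nu}(0)=1$, isolating the $a=0$ term yields
\[
\sum_{a\neq 0}\widehat{\psi}(a)\,\overline{\widehat{\nu}(ab^{m-1})}=-1\qquad\text{for every }m\in\N\setminus S.
\]
Now I let $m\to\infty$ through the infinite set $\N\setminus S$. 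For each fixed $a\neq 0$ we have $|ab^{m-1}|\to\infty$, so the contrapositive hypothesis forces $\widehat{\nu}(ab^{m-1})\to 0$; meanwhile the tail over large $|a|$ is controlled uniformly in $m$ using $|\widehat{\nu}|\le 1$ together with $\sum_a|\widehat{\psi}(a)|<\infty$. A standard dominated-convergence (split into $0<|a|\le A$ and $|a|>A$) argument then shows the left-hand side tends to $0$, contradicting its constant value $-1$. Hence $\widehat{\nu}(n)\not\to 0$, i.e.\ $\limsup_{|n|\to\infty}|\widehat{\nu}(n)|>0$.

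The step I expect to be the main obstacle is recognizing that the single-scale information is insufficient and must be combined across infinitely many scales. Indeed, merely knowing that $\sigma_m$ misses an interval of length $1/b$ cannot by itself prevent Fourier decay: absolutely continuous measures supported on a proper subinterval are Rajchman. The essential idea is therefore to test all the measures $\sigma_m$ ($m\in\N\setminus S$) against one \emph{fixed} smooth bump supported in the common gap $J$, so that its rapidly decaying Fourier coefficients legitimize passing to the limit in $m$. The remaining points are routine verifications: the fractional-part identity $T_m(t)=(x_m+\{b^mt\})/b$, and the disjointness of $\spt\psi$ from the closed intervals $[d/b,(d+1)/b]$, $d\in D$ (achieved by keeping $\spt\psi$ strictly inside $J$, away from the endpoints $c/b$ and $(c+1)/b$).
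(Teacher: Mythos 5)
Your proof is correct and follows essentially the same Salem--Zygmund argument as the paper: a fixed smooth bump supported in the gap $(c/b,(c+1)/b)$ with $c\notin D$ is tested against the measure rescaled by $b^{m-1}$ for $m\in\N\setminus S$, and the identity $\widehat{\sigma_m}(a)=\widehat{\nu}(ab^{m-1})$ together with the split into small and large frequencies yields the contradiction. The only (cosmetic) difference is that you phrase this via pushforward measures and treat the general case directly, whereas the paper first proves the claim for sets with $1\notin S$ and then reduces to that case by decomposing $E_{S,D}$ into $b$ pieces.
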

\begin{proof}
First we consider a special class of sets defined by digit restrictions. Define
\[
\mathcal{E}=\left\{E_{S,D}:\text{$\N\setminus S$ is not finite and $1\notin S$}\right\}.
\]
We claim that for any $E\in \mathcal{E}$ and any probability measure $\mu$ supported on $E$,
\[
\limsup_{|n|\to \infty}|\widehat{\mu}(n)|>0,
\]
The idea of the proof is essentially due to Salem and Zygmund \cite{SZ} (see also \cite[p.~113]{Ma}).

Suppose for a contradiction that
\begin{equation}\label{sump}
\lim_{|n|\to \infty}\widehat{\mu}(n)=0.
\end{equation}
Take any $d\in\{0,1,\cdots,b-1\}\setminus D$.   Define
\[
I=\left(\frac{d}{b}, \frac{d+1}{b}\right).
\]
Since $\N\setminus S$ is not finite, there exists an increasing sequence of integers $\{k_j\}_{j=1}^\infty$ such that $k_1\ge k_0$ and $k_j+1\notin S$ for $j=1,2,\cdots.$ It is not difficult to check that, for any $x\in E_{S,D}$ and $j=1,2,
\cdots$,
\begin{equation}\label{bz}
\{b^{k_j}x\}\notin I.
\end{equation}
Here $\{y\}$ stands for the fractional part of $y$, that is, $\{y\} \in [0, 1)$.

In fact, for $x=\sum_{n=1}^\infty x_nb^{-n}\in E_{S,D}$ and any $j\ge 1$
\[
b^{k_j}x=b^{k_j}\sum_{n=1}^{k_j}\frac{x_n}{b^n}+b^{k_j}\sum_{n=k_j+1}^\infty\frac{x_n}{b^n}.
\]
Noting that $x_{k_j+1}\not= d$, we have $\{b^{k_j}x\}\notin I.$

Now choose a function $\varphi$ from the Schwartz class such that the support of $\varphi$ is contained in $I$, i.e.,  $\spt \varphi\subset I.$ That is, $\varphi$ is infinitely differentiable and its derivatives of all orders tend to zero at infinity more quickly than $|x|^{-k}$ for all integers $k.$ We can also require that $\int \varphi(x) dx=1.$ For any $j=1,2,
\cdots,$ define
\[
\varphi_j(x)=\varphi(\{b^{k_j}x\})\quad x\in [0,1].
\]
By \eqref{bz}, we have $\spt\varphi_j\cap E_{S,D}=\emptyset.$ It follows from the Fourier inversion formula that
\[
\varphi_j(x)=\varphi(\{b^{k_j}x\})=\sum_{\ell\in \mathbb{Z}}\widehat{\varphi}(\ell)e^{2\pi i \ell \{b^{k_j}x\}}=\sum_{\ell\in \mathbb{Z}}\widehat{\varphi}(\ell)e^{2\pi i \ell b^{k_j}x}.
\]
Therefore, we have $\widehat{\varphi}_j(b^{k_j}\ell)=\widehat{\varphi}(\ell)$ and the other Fourier coefficients of $\varphi_j$ are zeros. It follows from the Parseval formula that, for any $j\ge 1$ and any $m>1$,
\[
\begin{split}
0&=\int \varphi_j(x)d\mu(x)=\sum_{\ell \in \mathbb{Z}}\overline{\widehat{\varphi}_j(\ell)}\widehat{\mu}(\ell)\\
 &=\sum_{\ell \in \mathbb{Z}}\overline{\widehat{\varphi}_j(b^{k_j}\ell)}\widehat{\mu}(b^{k_j}\ell)
=\sum_{\ell \in \mathbb{Z}}\overline{\widehat{\varphi}(\ell)}\widehat{\mu}(b^{k_j}\ell)\\
 &=\overline{\widehat{\varphi}(0)}\widehat{\mu}(0)+\sum_{1<|\ell|<m}\overline{\widehat{\varphi}(\ell)}\widehat{\mu}(b^{k_j}\ell)+\sum_{|\ell|\ge m}\overline{\widehat{\varphi}(\ell)}\widehat{\mu}(b^{k_j}\ell).
\end{split}
\]
Next, we shall estimate the above three terms. The first term is just $\mu(E_{S,D})=1.$  Note that $\varphi$ is in the Schwartz class and
\[
\left|\sum_{|\ell|\ge m}\overline{\widehat{\varphi}(\ell)}\widehat{\mu}(b^{k_j}\ell)\right|\le \mu(E_{S,D})\sum_{|\ell|\ge m}|\widehat{\varphi}(\ell)|,
\]
the third term can be arbitrarily small by choosing $m$ large enough. Finally, for any $m$, by the assumption \eqref{sump} we have
\[
\left|\sum_{1<|\ell|<m}\overline{\widehat{\varphi}(\ell)}\widehat{\mu}(b^{k_j}\ell)\right|\le 2m \sup_{|n|\ge b^{k_j}, n\in \mathbb{Z}}|\widehat{\mu}(n)|\to 0
\]
as $j\to \infty.$

Combining the above estimates, we have $\mu(E_{S,D})=0,$ which is a contradiction and therefore the claim holds.

Finally, we assume that $\N\setminus S$ is not finite and $\nu$ is a probability measure supported on $E_{S,D}$.

Let $k_0=\min\{k: k\notin S\}$. The number $k_0$ does exist due to the fact that $\N\setminus S$ is not finite. Clearly, we can decompose $E_{S,D}$ as
\[
E_{S,D}=\bigcup_{i=0}^{b-1}\left(E_{S,D}\cap \left(\frac{i}{b^{k_0-1}},\frac{i+1}{b^{k_0-1}}\right)\right):=\bigcup_{i=0}^{b-1}J_i.
\]
Observing that $J_i\in\mathcal{E}$ for $i=0,1,\cdots, b-1$, we have
\[
\nu=\sum_{i=0}^{b-1}\nu_i,
\]
where $\nu_i=\nu|_{J_i}$, the restriction of $\nu$ on $J_i, i=0,1,\cdots,b-1.$ Therefore, it follows from the above argument and the fact $\widehat{\nu}=\sum_{i=0}^{b-1}\widehat{\nu_i}$ that
\[
\limsup_{|n|\to \infty}|\widehat{\nu}(n)|>0.\qedhere
\]

%
\end{proof}

\begin{proof}[Proof of Theorem $\ref{MRR3}$.]

First, in the case $b=2$ and $D=\{0\}$, it follows from \eqref{hd} that
\[
\hdim E_{S,\{0\}}=\liminf_{N\to \infty}\frac{\sharp(S\cap\{1,\cdots, N\})}{N}
\]
and
\[
\adim E_{S,\{0\}}=\limsup\limits_{m\to \infty}\sup_{k\ge 1}\frac{\sharp(S\cap\{k+1,\cdots, k+m\})}{m}.
\]
Therefore, by Theorems \ref{MRR1} and \ref{MRR2}, it is sufficient to construct a set $S\subset \N$ such that $\N\setminus S$ is not finite and satisfies
\begin{equation}\label{con1}
\liminf_{N\to \infty}\frac{\sharp(S\cap\{1,\cdots, N\})}{N}=s
\end{equation}
and

\begin{equation}\label{con2}
\limsup\limits_{m\to \infty}\sup_{k\ge 1}\frac{\sharp(S\cap\{k+1,\cdots, k+m\})}{m}=1.
\end{equation}
Then, define
\[
E=E_{S,\{0\}}.
\]

We divide the proof into the following three cases according to the values
of $s.$

\textsc{Case 1}: $0<s<1$. Let $\{M_n\}_{n\ge 1}$ be an increasing sequence of positive integers with $M_1=1$ and
\[
 s M_2>1, \quad \lim\limits_{n\to \infty}\frac{M_n}{M_{n+1}}=0.
\]
Next we will use the sequence $\{M_n\}$ to construct the desired subset $S.$ Let us remark that our construction are inspired by the method in \cite{DWW}.

For $i\ge 1,$ let
\[
S_{2i}=\left\{M_{2i}+1,  \cdots, M_{2i}+\left[\frac{2i-1}{2i}(M_{2i+1}-M_{2i})\right]-1, M_{2i+1}\right\}
\]
and
\[
S_{2i-1}=\left\{M_{2i-1}+1, \cdots, M_{2i-1}+[s M_{2i}]-L_{2i-1}-1, M_{2i}\right\},
\]
where
\[
L_{2i-1}=\sum_{j=1}^{i-1}[sM_{2j}]+\sum_{j=1}^{i-1}\left[\frac{2j-1}{2j}(M_{2j+1}-M_{2j})\right].
\]
Here and in the sequel, the notation $[x]$ denotes the integer part of $x$.

Then, define
\[
S=\bigcup_{i\ge1}S_i.
\]
Note that, for any $i\ge 1$,
\[
\left[\frac{2i-1}{2i}(M_{2i+1}-M_{2i})\right]\not= M_{2i+1}-M_{2i}.
\]
Therefore, $\N\setminus S$ is not finite.

On the other hand, it is easy to check that
\[
\text{$\sharp(S\cap\{1,\cdots, M_{2i}\})=[s M_{2i}]$ for any $i\ge 1$}
\]
and
\[
\limsup\limits_{m\to \infty}\sup_{k\ge 1}\frac{\sharp(S\cap\{k+1,\cdots, k+m\})}{m}\ge \sup_{i\ge 1}\frac{\sharp(S\cap S_{2i})}{M_{2i+1}-M_{2i}}=1.
\]
Therefore, $S$ satisfies conditions \eqref{con1} and \eqref{con2}.

\textsc{Case 2}: $s=0.$ Let
\[
S=\bigcup_{n=1}^\infty\{ n^3+1, \cdots, n^3+n\}.
\]
Then, it is easy to check that $S$ satisfies conditions \eqref{con1}, \eqref{con2} and $\N\setminus S$ is not finite.


\textsc{Case 3}: $s=1.$ The proof is similar to that in Case 1.
We only give the key constructions.

Let $\{M_n\}_{n\ge 1}$ be an increasing sequence of positive integers with $M_1=1$ and
\[
 \lim\limits_{n\to \infty}\frac{M_n}{M_{n+1}}=0.
\]
For $i\ge 1,$ let
\[
S_{2i}=\left\{M_{2i}+1,  \cdots, M_{2i}+\left[\frac{2i}{2i+1}(M_{2i+1}-M_{2i})\right]-1, M_{2i+1}\right\}
\]
and
\[
S_{2i-1}=\left\{M_{2i-1}+1, \cdots, M_{2i-1}+\left[\frac{2i-1}{2i} M_{2i}\right]-L_{2i-1}-1, M_{2i}\right\},
\]
where
\[
L_{2i-1}=\sum_{j=1}^{i-1}\left[\frac{2j-1}{2j}M_{2j}\right]+\sum_{j=1}^{i-1}\left[\frac{2j}{2j+1}(M_{2j+1}-M_{2j})\right].
\]
Define
\[
S=\bigcup_{i\ge1}S_i. \qedhere
\]

\end{proof}


\subsection*{Acknowledgements}
The authors would like to thank Xianghong Chen for valuable discussions at the early stage of this work, especially for drawing our attention to Proposition \ref{pro}. This project was supported by the National Natural Science Foundation of China (11671189, 11771153 \& 11471124), the Natural Science Foundation of Fujian Province (2017J01403), and the
Program for New Century Excellent Talents in Fujian Province University.

\end{document}